\theoremstyle{definition}
\theoremstyle{definition}
\newtheorem{theorem}{Theorem}[section]
\newtheorem{lem}[theorem]{Lemma}
\newtheorem{corollary}[theorem]{Corollary}
\theoremstyle{definition}
\newtheorem{example}[theorem]{Example}
\newcommand{\A}{\mathbb{A}}
\newcommand{\B}{\mathbb{B}}
\newcommand{\C}{\mathbb{C}}
\newcommand{\E}{\mathbb{E}}
\newcommand{\Z}{\mathbb{Z}}
\def\Pr{\mathbb{P}}
\newcommand{\Pn}{{\rm Poisson}}
\newcommand{\CP}{{\rm CP}}
\newcommand{\cprv}{{\cal C}}
\newcommand{\NB}{{\rm NB}}
\def\L{\lambda}
\def\FD{FD_{n,m}}
\def\FDo{FD_{n,1}}
\def\Ref#1{(\ref{#1})}
\newcommand{\eqs}{\begin{eqnarray*}}
\newcommand{\ens}{\end{eqnarray*}}
\newcommand{\beas}{\begin{eqnarray*}}
\newcommand{\enas}{\end{eqnarray*}}
\newcommand{\eqa}{\begin{eqnarray}}
\newcommand{\ena}{\end{eqnarray}}
\newcommand{\eq}{\begin{equation}}
\newcommand{\en}{\end{equation}}
\newcommand{\cF}{{\cal F}}
\newcommand{\cI}{{\cal I}}
\newcommand{\cL}{{\cal L}}
\newcommand{\cf}{{\cal F}}
\def\ignore#1{}
\def\Po{{\rm Pn}}
\def\L{\lambda}
\def\resm{{(m)}}
\def\resmone{{(m-1)}}
\def\resone{{(1)}}
\renewcommand\theequation{\thesection.\@arabic\c@equation}
\newcommand\bp{\boldsymbol{\pi}}
\newcommand\fn{\mathcal{N}}
\def\ind{\bm{1}}  
\def\E{\mathbb{E}} 
\def\Pr{\mathbb{P}} 
\def\Z{\mathbb{Z}} 
\def\unt{{u_n^{(\tau)}}}
\def\Ref#1{(\ref{#1})}
\newcommand{\Nb}{\text{NB}}
\numberwithin{equation}{section}  
\begin{document}

\title{Fragility distributions and their approximations}

\author{H. L. Gan\footnote{Department of Mathematics and Statistics, the University of Melbourne, 
Parkville, VIC 3010, Australia. E-mail: ganhl@ms.unimelb.edu.au}\\ University of Melbourne    
\and    
A. Xia\footnote{Department of Mathematics and Statistics, the University of Melbourne, 
Parkville, VIC 3010, Australia. E-mail: aihuaxia@unimelb.edu.au 
\newline 
Work supported in part by Australian Research Council Grants No DP120102398}.
\\    
University of Melbourne}    
\date{5 February 2014}
\maketitle

\begin{abstract}
Given a sequence of $n$ identically distributed random variables with common distribution $F$, the \emph{fragility distribution of order $m$}, represented by $\FD$, is the limit conditional distribution of the number of exceedances given there are at least $m$ exceedances, as the threshold tends to the right end point of $F$. In this paper we are concerned with the existence of $\FD$ and its asymptotic behaviour when $n$ becomes large. For a stationary sequence with its exceedance process converging to a compound Poisson process, we {derive} an explicit formula for calculating $\lim_{n \to \infty} \FD$. We also establish Stein's method for estimating the errors involved in fragility distribution approximations. 
\end{abstract}

\vskip12pt \noindent\textit {Key words and phrases\/}: Exceedances, fragility distribution, compound Poisson approximation, Stein's method, Stein's factors.

\vskip12pt \noindent\textit {AMS 2010 Subject Classification\/}: Primary 60F05; Secondary 60E15, 60G10, 60G70, 60J27. 

\section{Introduction}
The number of earthquake related claims made to an insurance company is typically zero. However, given the event that at least one claim was made, it is highly likely that multiple claims were lodged. A question worth considering then is, given at least one earthquake related claim occurred, what is the distribution of the total number of claims? This idea extends to the more general question: if there are at least $m$ extreme events, what is the distribution of the number of extreme events? Aspects of this idea have been formalised in terms of the fragility index of order $m$, first introduced by Geluk et al.~(2007). Given a stationary sequence of random variables $X_1, \ldots , X_n$ with common distribution function $F_X$, define the number of exceedances above the threshold $s$ as:
\[ N_{s,n}: = \sum_{i=1}^n \mathbf{1}_{(s, \infty )}(X_i). \]
The extended fragility index of order $m$, denoted by $FI_n(m)$, is the asymptotic expected number of exceedances given that there are at least $m$ exceedances:
\[ FI_n(m) := \lim_{s \nearrow} \E(N_{s,n} | N_{s,n}\geq m), \]
where $s \nearrow$ is interpreted as ``when $s$ approaches the right end point $x_F:=\sup\{t:\ F_X(t)<1\}$ of $F$ from below". 

It is well-known in statistics that expectations do not carry sufficient information for statistical inferences. In this paper, we will instead consider the fragility distribution of order $m$ defined as
\[\FD:= \lim_{s \nearrow}\mathcal{L}\left(N_{s,n} | N_{s,n} \geq m\right).\]
In the case where $X_1,\ldots,X_n$ are independent and identically distributed (i.i.d.) with $\lim_{s\nearrow}(1-F_X(s))=0$, it is simple to show $\FD(\{m\})=1$ for all $m\leq n$. {In this case $\FD$ exists for all $m\le n$. In general though, given that $\FD$ exists for some natural numbers $m$, does this imply the existence of $\FD$ for other natural numbers? This question will be addressed in section~2.}

Ultimately, it is the dependence structure that characterises the properties of $\FD$. In section~3, we explore $\FD$ for a stationary sequence as $n$ tends towards infinity. Hsing et\ al.~(1998) showed that under a mixing condition, if the exceedance point processes converge to a limit distribution, then the limit is of a compound Poisson type. Using this result, we establish a relation between the limiting compound Poisson distribution and fragility distributions, and give an explicit formula for calculating $\lim_{n \rightarrow \infty}\FD$ for all $m\ge1$. 

In applications, we often face a fixed $n$ and hence it is of interest to know the errors involved when approximations are used to replace the actual fragility distribution. In section 4 we focus on estimating errors of a conditional compound Poisson approximation using Stein's method. However, similar to compound Poisson approximation, Stein's constants for conditional compound Poisson approximation are generally crude and are of little value unless specific conditions are satisfied. This leads us to investigate Stein's factors for conditional compound Poisson approximation when the compounding distribution satisfies a {certain} condition and conditional negative binomial approximation. Finally, examples are provided to show that the errors are typically small in applications when these approximations are used to replace the fragility distributions. 

\section{Existence of fragility distributions}

We consider the relationship between fragility distributions of different orders. By formulating the fragility distribution in the following manner the relationship between $\FD$ and $FD_{n,m+1}$ becomes clearer. For all $A \subset\{m, m+1, \ldots , n\} $,
\begin{align}
\FD(A) = \lim_{s \nearrow} \frac{\Pr(N_{s,n}\in A)}{\Pr(N_{s,n} \geq m)} 
	 = \lim_{s \nearrow} \frac{\frac{\Pr(N_{s,n}\in A)}{\Pr(N_{s,n} \geq m+1)}}
							  {1 + \frac{\Pr(N_{s,n}=m)}{\Pr(N_{s,n} \geq m+1)}}. \label{bigeq}
\end{align}
Notice that the numerator in (\ref{bigeq}) yields $FD_{n,m+1}$ if $\lim_{s \nearrow} \frac{\Pr(N_{s,n}\in A)}{\Pr(N_{s,n} \geq m+1)}$ exists. From this formulation we can see that if for all $A$, two of
$\FD(A)$,
$FD_{n,m+1}(A)$ and
$\lim_{s \nearrow} \frac{\Pr(N_{s,n} =m )}{\Pr(N_{s,n} \geq m+1)}$ exist,
then the existence of the third is ensured. Hence, whether the existence of $\FD$ implies the existence of $FD_{n,m+1}$ or vice versa depends on the existence of $\lim_{s\nearrow} \frac{\Pr(N_{s,n} =m)}{\Pr(N_{s,n} \geq m+1)}$.

The following counterexample shows that the existence of $\FD$ does not guarantee the existence of $FD_{n,m+1}$. Neither does the existence of $FD_{n,m+1}$ guarantee the existence of $\FD$.

To start, we define a density function on $[0,1]$ as
$$
g_1(y) = \begin{cases}
 2 & y \in [1-\frac{1}{2^k}, 1-\frac{3}{2^{k+2}}] \text{ for }k \in \Z_+ := \{0,1,2,\ldots\}{,} \\
 0 & \text{otherwise}{.}
\end{cases}
$$
Let $G_1(y)$ denote the distribution function of $g_1$. 
By considering the two sequences $y_k = 1-\frac{1}{2^k}$ and $y_k = 1-\frac{3}{2^{k+1}}$, it can be shown that
\begin{equation}\lim_{y \rightarrow 1}\frac{1-G_1(y)}{1-y}\label{xiaadd05}\end{equation}
does not exist.

Now we present an example where $FD_{n,2}$ exists, but $FD_{n,1}$ does not.

Consider a two dimensional random vector on the unit square, where the density sits entirely upon {three} lines
$L_1 = \{(x_1,0),\text{ } 0 < x_1 \leq 1\}$,
$L_2 = \{(0,x_2), \text{ }0 \le x_2 \leq 1\}$,
 $L_{12} = \{(x_1,x_2),\text{ } 0< x_1=x_2\leq1\}$.
We use the (one-dimensional) Lebesgue measure on each of the three line segments and put rescaled uniform densities on $L_1$ and $L_2$, and a rescaled density of $g_1$ on the diagonal. Hence our density is

\begin{equation*}
h_1(x_1,x_2) = \begin{cases}
\frac{1}{c_1} & (x_1,x_2)\in L_1 \cup L_2,\\
\frac{1}{c_1}g_1\left(\frac{r}{\sqrt2}\right) & (x_1,x_2) \in L_{12},\\
0 & \text{otherwise},
\end{cases}
\end{equation*}
where
$r =\sqrt{x_1^2 + x_2^2}, \text{ } c_1 = 2 + \sqrt{2}.$
It is easy to see that $FD_{2,2}(\{2\}) = 1$, as the number of exceedances is at least 2, but can not exceed 2. However,
\begin{align*}
\frac{\Pr(N_{s,2} = 1)}{\Pr(N_{s,2} \geq 2)} = \frac{2\Pr(X_1> s, X_2 = 0)}{\Pr(X_1 = X_2 \geq s)}
								    = \sqrt{2} \cdot \frac{1-s}{1-G_1(s)},
\end{align*}
which, according to \Ref{xiaadd05}, does not converge as $s\nearrow$. Therefore, even though $FD_{2,2}$ exists, $FD_{2,1}$ does not.

We now construct another counter-example to show that despite the existence of $FD_{3,1}$, $FD_{3,2}$ does not exist. To this end,
let the joint density of $(X_1, X_2, X_3)$ lie only on the following lines:
$L_1 = \{(x_1,0,0),\text{ } 0 < x_1 \leq 1\},$
$L_2 = \{(0,x_2,0), \text{ }0 < x_2 \leq 1\},$
$L_3 = \{(0,0,x_3), \text{ }0 < x_3 \leq 1\},$
$L_{12} = \{(x_1,x_2,0),\text{ } 0< x_1=x_2\leq1\},$
$L_{23} = \{(0,x_2,x_3),\text{ } 0< x_2=x_3\leq1\},$
$L_{13} = \{(x_1,0,x_3),\text{ } 0< x_1=x_3\leq1\},$
$L_{123} = \{(x_1,x_2,x_3),\text{ } 0\leq x_1=x_2=x_3\leq1\},$
equipped with the (one-dimensional) Lebesgue measure on these lines.
We define a new distribution function
\begin{equation*}
G_2(z) = \begin{cases}
0 & z < 0,\\
1-(1-z)(1-G_1(z)) & z \in [0,1],\\
1 & \text{otherwise},
\end{cases}
\end{equation*}
and denote its density by $g_2(z)$.
We then set up our joint density of $(X_1,X_2,X_3)$ as 
\begin{equation*}
h_2(x_1,x_2,x_3) = \begin{cases}
\frac{1}{c_2} & (x_1,x_2,x_3)\in L_1 \cup L_2 \cup L_3,\\
\frac{1}{c_2}\left(\sqrt{2}-r\right) & (x_1,x_2,x_3) \in L_{12} \cup L_{23} \cup L_{13},\\
\frac{1}{c_2} g_2\left(\frac{r}{\sqrt{3}}\right) & (x_1,x_2,x_3) \in L_{123}, \\
0 & \text{otherwise},
\end{cases}
\end{equation*}
where
$ r= \sqrt{x_1^2 + x_2^2 + x_3^2} \text{, } c_2 = 6 + \sqrt{3}.$
Now, 
\begin{eqnarray*}
\Pr(N_{s,3}=1|N_{s,3}\geq1) =\frac{3}{\sqrt{3}(1-G_1(s)) + 3(1-s) + 3}
				       \rightarrow 1\mbox{ as }s\nearrow,
\end{eqnarray*}
so $FD_{3,1}$ exists. On the other hand, using \Ref{xiaadd05}, one can show
\begin{eqnarray*}
\Pr(N_{s,3}=2 | N_{s,3}\geq2) 
		       = \frac{\sqrt{3}}{ \frac{1-G_1(s)}{1-s} + \sqrt{3}},
\end{eqnarray*}
does not converge as $s \nearrow$  and hence $FD_{3,2}$ does not exist.

\section{Stationary Sequences}

In the previous section we considered the fragility distribution in the case when the number of random variables {was} fixed and finite. We now study properties of $\lim_{n \rightarrow \infty} \FD$ for stationary sequences 
$\{X_i,\ i\ge 1\}$ with distribution $F_X$ satisfying 
 \begin{equation}
 \lim_{s\nearrow}\frac{1-F_X(s)}{1-F_X(s^-)}=1.\label{xiaadd02}
 \end{equation} 
Let $\fn_{u_n,n}(B)=\sum_{i=1}^n\ind_{\{\frac in\in B,X_i>u_n\}}$ for any Borel $B\subset [0,1]$, where $\{u_n\}$ is a sequence of constants approaching $x_F$. Hence $\fn_{u_n,n}$ is the time-scaled point process of exceedances and it serves as an instrument for using point process theory to obtain limiting properties in extreme value theory, see for example, Leadbetter~et~al.~(1983), chapter~5. 

We say a random variable $\cprv $ has a compound Poisson distribution $\CP(\bm\L)$ with $\bm\L=(\lambda_1,\lambda_2,\dots)$, if 
$\cprv  \stackrel{d}{=} \sum_{i=1}^\infty i X_i$, where $X_i$ follows Poisson distribution with mean $\lambda_i$, denoted by $\Po(\lambda_i)$, and the $X_i$'s are independent.
If we write $\L=\sum_{i=1}^\infty\L_i$ and define $\pi_i=\frac{\L_i}{\L}$, $i\ge 1$, then $\cprv $ can also be represented as the sum of a $\Po(\L)$ number of i.i.d. random variables with cluster distribution $\bp:=\{\pi_i\}$.
It has been established since Hsing et al.~(1988) that under some mild conditions, the limiting distribution of 
$N_{u_n,n}=\fn_{u_n,n}([0,1])$ is necessarily compound Poisson. This observation relates the study of extreme value theory to the estimates of the accuracy of compound Poisson approximation for $\cL(N_{u_n,n})$, which can be found in, e.g., \cite{BCL,BNX,Er99,Er00,Michel87,N98,N03,NX12,Raab,Roos94}.

By taking appropriate percentiles of the underlying distribution, we can find a normalising sequence $\{\unt\}$ such that for any $0 < \tau < \infty$,
$$ n(1-F(u_n^{(\tau)})) \rightarrow \tau \hspace{1cm} \text{as }n \rightarrow \infty.$$
The existence of such sequences $\{\unt\}$ is guaranteed by the condition \Ref{xiaadd02} (see Leadbetter et\ al.~(1983), Theorem~1.7.13). In this section, assume that we are working with such sequences $\unt$ and condition \Ref{xiaadd02} holds. 
%

\begin{theorem}\label{limit}
Suppose that $\FD$ exists for sufficiently large $n$ and for any $0<\tau \le \tau_0$ and $k \geq m$, $\Pr(N_{\unt,n} =k| N_{\unt,n} \geq m)$ converges uniformly in $n$ to $\Pr(N^\tau=k|N^\tau \geq m)$, where
$N^\tau$ is a compound Poisson random variable with rate $\theta \tau$, $\theta>0$, and cluster distribution $\bp$. Then for any $A \subset \Z_m:=\{ m, m+1, \ldots \}$,
\begin{align*}
\lim_{n \rightarrow \infty} \FD(A) 
	&=\frac{\bp^{*\cI_m}(A)}{\bp^{*\cI_m}(\Z_m)},
\end{align*}
where $\bp^{*j}$ is the convolution of $\bp$ $j$ times and
$\cI_m = \min\{i : \bp^{*i}(\Z_m) > 0\}$. In particular, we have
$\lim_{n \rightarrow \infty} \FDo=\bp.$

\end{theorem}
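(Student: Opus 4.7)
The plan is to route the left-hand side through the small-$\tau$ asymptotics of the conditional compound Poisson probabilities $\Pr(N^\tau\in\cdot\mid N^\tau\ge m)$. Since $\FD$ exists and, by the percentile construction together with \eqref{xiaadd02}, $\unt\to x_F$ as $\tau\downarrow 0$ for each fixed $n$, I would evaluate the defining limit of $\FD(A)$ along the sequence $s=\unt$:
\[
\FD(A)\Eq \lim_{\tau\downarrow 0}\Pr(N_{\unt,n}\in A\mid N_{\unt,n}\ge m).
\]
The uniform-in-$n$ convergence hypothesised in the theorem then lets us swap the $n\to\infty$ and $\tau\downarrow 0$ limits in the style of Moore--Osgood, yielding
\[
\lim_{n\to\infty}\FD(A)\Eq \lim_{\tau\downarrow 0}\Pr(N^\tau\in A\mid N^\tau\ge m).
\]
Passing from the singleton-wise convergence supplied by the hypothesis to set-wise convergence on $A\subset\Z_m$ is handled by a standard tail-truncation at level $K$ using $\Pr(N^\tau\ge K)$ together with the fact that the conditional pmf has total mass $1$.

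Next, I would expand $\Pr(N^\tau\in B)$ for $B\subset\Z_m$ using the representation $N^\tau\eqd \sum_{j=1}^{K_\tau}Y_j$ with $K_\tau\sim\Pn(\theta\tau)$ and $Y_j$ i.i.d.\ with law $\bp$ on $\Z_1$:
\[
\Pr(N^\tau\in B)\Eq \sum_{j=0}^\infty e^{-\theta\tau}\frac{(\theta\tau)^j}{j!}\,\bp^{*j}(B).
\]
By the definition of $\cI_m$, the summands with $j<\cI_m$ satisfy $\bp^{*j}(B)\le \bp^{*j}(\Z_m)=0$, so the series begins at $j=\cI_m$ and
\[
\Pr(N^\tau\in B)\Eq \frac{(\theta\tau)^{\cI_m}}{\cI_m!}\,\bp^{*\cI_m}(B)\,(1+O(\tau)) \quad\text{as } \tau\downarrow 0.
\]
Applying this to $B=A$ and to $B=\Z_m$ and forming the conditional ratio, the common factors $e^{-\theta\tau}(\theta\tau)^{\cI_m}/\cI_m!$ cancel exactly, leaving $\bp^{*\cI_m}(A)/\bp^{*\cI_m}(\Z_m)$. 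For $m=1$ we have $\cI_1=1$ since $\bp(\Z_1)=1$, so the ratio collapses to $\bp(A)$, giving $\lim_n\FDo=\bp$.

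The main obstacle is the limit interchange in the first step: ensuring that the stated uniform convergence is strong enough to commute with the $\tau\downarrow 0$ limit, and then upgrading from the pointwise-in-$k$ convergence of the conditional pmf to convergence on general $A\subset\Z_m$ while keeping uniformity intact. Once that bookkeeping is secured, the compound Poisson asymptotic is just a Taylor-style expansion in $\tau$ at the minimal non-vanishing order $\cI_m$, and the theorem drops out by cancellation.
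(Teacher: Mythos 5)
Your proposal is correct and follows essentially the same route as the paper: exchange the $n\to\infty$ and $\tau\downarrow 0$ limits via the uniform convergence hypothesis (the paper cites Rudin's Theorem~7.11), then condition the compound Poisson variable on its Poisson number of clusters and extract the leading order $\cI_m$ term, with the $m=1$ case following from $\cI_1=1$. The only small quibble is that your expansion for $B=A$ should carry an additive error $o(\tau^{\cI_m})$ (as in the paper) rather than a multiplicative factor $(1+O(\tau))$, since $\bp^{*\cI_m}(A)$ may vanish; this does not affect the conclusion.
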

\begin{proof} Applying Theorem~7.11 in Rudin~(1976), one can see that the uniform convergence allows the exchange of limits, {giving}
\begin{eqnarray}
\lim_{n \rightarrow \infty}\FD(A) &=& \lim_{n \rightarrow \infty} \lim_{s \nearrow} \Pr(N_{s,n} \in A | N_{s,n} \geq m)\nonumber\\
&=&\lim_{n \rightarrow \infty} \lim_{\tau \rightarrow 0} \Pr(N_{\unt,n} \in A | N_{\unt,n} \geq m)\nonumber\\
&=& \lim_{\tau \rightarrow 0}\lim_{n\to\infty} \Pr(N_{\unt,n}\in A | N_{\unt,n}\geq m). \label{xiaadd09}
\end{eqnarray}
Since $N^\tau$ follows compound Poisson distribution with rate $\theta \tau$ and compounding distribution $\bp$, we can write 
\[N^\tau\stackrel{d}{=}\sum_{i=1}^{P_\tau} \xi_i ,\]
where
$P_\tau$ is a Poisson random variable with mean $\theta \tau$, $\xi_1,\xi_2,\ldots$ are i.i.d. with distribution $\bp$ and independent of $P_\tau$.
Using the law of total probability, by conditioning on $P_\tau$, we obtain from \Ref{xiaadd09} that
for $A\subset \Z_m$,
\begin{align*}
\lim_{n \rightarrow \infty} \FD(A) &= \lim_{\tau \rightarrow 0} \Pr(N^\tau \in A |N^\tau\geq m)\\
	&=\lim_{\tau \rightarrow 0} \frac{\bp^{*\cI_m}(A) \frac{e^{-\theta \tau}(\theta \tau)^{\cI_m}}{\cI_m!} + o(\tau^{\cI_m})}{  \bp^{*\cI_m}(\Z_m) \frac{e^{-\theta \tau}(\theta \tau)^{\cI_m}}{\cI_m!}+ o(\tau^{\cI_m})}=\frac{\bp^{*\cI_m}(A)}{\bp^{*\cI_m}(\Z_m)}.
\end{align*}
Finally, for $m=1$, since $\bp(\Z_1)=1$, then $\cI_1=1$ and the conclusion follows.
\end{proof}


In applications, we often face situations where $n$ is fixed and $u_n$ is chosen in such a way that the number of exceedances is within an acceptable range. That is, regardless how large the data set we have, we don't have sufficient information to obtain the exact fragility distribution. Hence, for practical purposes, our interest should be focused on suitable approximations of fragility distributions and their associated error estimates. On the other hand, although it has been established that $\cL(N_{s,n})$ can be reasonably approximated by a compound Poisson distribution (\cite{BCL,BNX,Michel87,N98,N03,NX12,Roos94}), the probability $\Pr(N_{s,n} \geq m)$ is usually small and the error estimates are often larger than the actual probabilities. For this reason, we can not rely on the estimates of the errors of $\Pr(N_{s,n} \in \cdot)$ and $\Pr(N_{s,n} \geq m)$ to work out the approximation errors of $\Pr(N_{s,n} \in \cdot | N_{s,n} \geq m)$ and it is necessary to study the estimates of $\Pr(N_{s,n} \in \cdot|N_{s,n} \geq m)$ directly.

\section{Conditional compound Poisson Approximation}
In the previous section we have seen that the limit fragility distribution can be evaluated by the conditional compound Poisson limit. In this section we will focus on estimating errors of conditional compound Poisson approximation via Stein's method.

 For any random variable $X$, we write $X^{\resm}$ as a random variable having the distribution $\mathcal{L}(X| X \geq m)$, where $m$ is a non-negative integer. For convenience, we define $\CP^{\resm}(\bm{\L}):=\mathcal{L}(\cprv ^{\resm})$, where $\cprv  \sim \CP(\bm{\lambda})$. The following lemma can be directly verified.

\begin{lem}
For a non-negative integer $m$, $W \sim \CP^{\resm}(\bm{\L})$ if and only if for all bounded functions $g_m$ on $\Z_m$,
\begin{align}
\E \left[ \sum_{j=1}^\infty j \L_jg_m(W+ j) - W g_m(W) \ind_{W > m} \right] = 0.\label{steinidentityCP}
\end{align}
\end{lem}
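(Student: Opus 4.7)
My plan is to verify both directions of this Stein characterization by a direct pmf-level calculation, as the ``directly verified'' phrasing of the lemma suggests.

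For the forward implication (that $W \sim \CP^{\resm}(\bm\L)$ implies the displayed identity), I would start from the formula $\Pr(W = k) = \Pr(\cprv = k)/\Pr(\cprv \geq m)$ for $k \geq m$, where $\cprv \sim \CP(\bm\L)$, and rewrite the expectation against $W$ as a weighted expectation against $\cprv$ restricted to $\{\cprv \geq m\}$. The backbone of the computation is the standard unconditional CP Stein identity $\E[\sum_{j \geq 1} j\L_j f(\cprv + j)] = \E[\cprv f(\cprv)]$ for bounded $f:\Z_+\to\R$, which I would apply to a test function $f$ obtained by extending $g_m$ from $\Z_m$ to $\Z_+$ (for example, setting $f \equiv 0$ on $\{0,\ldots,m-1\}$). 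The resulting identity will differ from the target by boundary contributions coming from $\cprv \in \{0,\ldots,m-1\}$ and from the value $W = m$; I would reconcile them using the CP pmf recursion $k \Pr(\cprv = k) = \sum_{j=1}^{k} j\L_j \Pr(\cprv = k-j)$ at the boundary.

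For the converse, I would test the identity against the point-mass indicators $g_m = \ind_{\{l\}}$ for each $l \geq m$. For $l = m$ both terms vanish trivially, while for $l > m$ this yields an explicit recursion expressing $\Pr(W = l)$ as a linear combination of $\Pr(W = k)$ for $m \leq k < l$. Combined with the normalization $\sum_{k \geq m} \Pr(W = k) = 1$, this recursion determines the pmf on $\Z_m$ uniquely, and the forward direction exhibits $\CP^{\resm}(\bm\L)$ as a solution, hence $W \sim \CP^{\resm}(\bm\L)$.

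The main obstacle I anticipate is the boundary bookkeeping. The asymmetric shape of the identity --- an unconditional first sum versus an indicator $\ind_{W > m}$ on the second --- is presumably tuned so that the contributions arising from conditioning at $\{\cprv \geq m\}$ precisely offset the omission of the $W = m$ term in the drift piece. Isolating these contributions and checking that they cancel against the $k = m$ case of the unconditional CP recursion is the one step I expect to demand careful attention; once that is in place the rest is routine manipulation.
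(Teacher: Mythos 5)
Your plan (forward direction by transferring the unconditional compound Poisson Stein identity/Panjer recursion to the conditional law, converse by testing with point masses plus a uniqueness-of-recursion argument) is the natural reading of ``directly verified'', but the one step you defer --- that the boundary contributions ``precisely offset'' --- is exactly the step that fails, and it cannot be repaired by more careful bookkeeping. Carrying out your own computation with $W\sim\CP^{\resm}(\bm\L)$ and the recursion $k\Pr(\cprv=k)=\sum_{j=1}^{k}j\L_j\Pr(\cprv=k-j)$, the two sides do not match: one finds
\[
\E\Bigl[\sum_{j\ge1}j\L_j g_m(W+j)-Wg_m(W)\ind_{W>m}\Bigr]
=-\frac{1}{\Pr(\cprv\ge m)}\sum_{j\ge2}j\L_j\sum_{l=(m+1-j)\vee 0}^{m-1}\Pr(\cprv=l)\,g_m(l+j).
\]
The leftover comes from unconditional mass strictly below $m$: after truncation the drift term $Wg_m(W)\ind_{W>m}$ can no longer reach it, while the jump term, having cluster sizes $j\ge2$, still injects it above $m$. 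It vanishes only when $m=0$ or when $\L_j=0$ for all $j\ge2$ (unit jumps, i.e.\ the Poisson and negative binomial birth--death situations, which is why the analogous identity \Ref{steinidentity} and the $L_f$-extension trick in Lemma~\ref{uncond-cond-thm} do work there). Concretely, take $m=1$, $\L_2>0$, all other $\L_j=0$, and $g_m=\ind_{\{2\}}$: the first term is $2\L_2\Pr(W=0)=0$ while the second is $2\Pr(W=2)>0$, so the identity fails for the genuine conditional law $\mathcal{L}(\cprv\mid\cprv\ge1)$.

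This also undercuts the converse as you set it up: testing with $g_m=\ind_{\{l\}}$ does give the truncated recursion $l\Pr(W=l)=\sum_{j=1}^{l-m}j\L_j\Pr(W=l-j)$ for $l>m$, and its solution on $\Z_m$ is indeed unique up to normalisation, but the distribution it pins down is not $\mathcal{L}(\cprv\mid\cprv\ge m)$, because the conditional compound Poisson pmf satisfies the full recursion that reaches below $m$, not the truncated one. So the final step of your plan, ``the forward direction exhibits $\CP^{\resm}(\bm\L)$ as a solution'', is unavailable: for $m\ge1$ with cluster sizes of two or more the identity \Ref{steinidentityCP} characterises a different distribution on $\Z_m$. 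The obstruction is intrinsic (it is the same phenomenon behind the paper's later remark that the transfer property of Lemma~\ref{uncond-cond-thm} ``does not seem to be shared by the general conditional compound Poisson approximation''), so you should not expect the cancellation you anticipate to emerge from the $k=m$ case of the unconditional recursion; the offending terms involve $\Pr(\cprv=l)$ for every $l\le m-1$ and every $j\ge2$, not just a single boundary value.
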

Let ${\cal B}_m g_m(i):=\sum_{j=1}^\infty j \L_jg_m(i+ j) - i g_m(i) \ind_{i > m}$. Our interest is to assess the difference between two distributions $Q_1$ and $Q_2$ on $\Z_m$ so we define the total variation distance as
$$d_{TV}(Q_1,Q_2):=\sup_{f\in\mathcal{F}_m}\left|\int fdQ_1-\int fdQ_2\right|,$$
where  $\mathcal{F}_m:=\{{\bf 1}_A:\ A\subset \Z_m\}$.
We write Stein's equation as
\begin{equation}
{\cal B}_m g_m(i)=f(i)-\CP^{\resm}(\bm\L)\{f\}, \ f\in\mathcal{F}_m\label{SteineqCP}
\end{equation}
where $\CP^{\resm}(\bm\L)\{f\} := \E f(\cprv ^{\resm})$ with $\cprv ^{\resm} \sim \CP^{\resm}(\bm\L)$. Using the same argument as in Theorem~1 in Barbour, Chen \& Loh~(1992), one can prove that the equation \Ref{SteineqCP} has a solution $g_{m,f}$ defined on $\Z_m$ and the solution is unique except at $i=m$. 

For a function $h$ on $\Z_m$, we write $\Delta h(\cdot)=h(\cdot+1)-h(\cdot)$ and $\|h\|_m=\sup_{w\in \Z_m}|h(w+1)|.$
To apply Stein's method, bounds for 
\begin{align}
G_{m,1}= \sup_{f \in \mathcal{F}_m}\|g_{m,f}\|_m,\ \ \ 
G_{m,2} = \sup_{f \in \mathcal{F}_m}\| \Delta g_{m,f} \|_m\label{G1G2}
\end{align}
are needed. 
However, as was demonstrated in Barbour, Chen \& Loh~(1992) and Barbour \& Utev~(1998, 1999), the estimates of Stein's factors for general compound Poisson approximation are unsatisfactory and useful estimates are only available for special cases. Consequently, we will deal with two special cases: (1) $i\lambda_i$ is decreasing in $i$ and (2) conditional negative binomial approximation.

\subsection{Case 1: $i\lambda_i$ is monotone decreasing in $i$}\label{sec:case1}

\begin{theorem}\label{thmCPdecreasing} If $i\L_i$ is a decreasing function of $i$, then both $G_{m,1}$ and $G_{m,2}$ are decreasing in $m$.
\end{theorem}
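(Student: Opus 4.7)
The strategy is to exhibit $g_{m+1,f}$ as a convex combination of two Stein solutions at the lower level $m$ and then apply convexity of the sup-norm. For any $f=\mathbf{1}_A\in\mathcal{F}_{m+1}$ with $A\subseteq\Z_{m+1}$, consider the two canonical extensions in $\mathcal{F}_m$, namely $\tilde f:=\mathbf{1}_A$ (so $\tilde f(m)=0$) and $f':=\mathbf{1}_{A\cup\{m\}}$ (so $f'(m)=1$). Writing $\mu_m=\E\tilde f(\cprv^{(m)})$, $\mu_m'=\E f'(\cprv^{(m)})$ and $\mu_{m+1}=\E f(\cprv^{(m+1)})$, and setting $p_m^{(m)}=\Pr(\cprv^{(m)}=m)$, a short computation gives $\mu_m=(1-p_m^{(m)})\mu_{m+1}$ and $\mu_m'=\mu_m+p_m^{(m)}$, from which
\[ \mu_{m+1}=(1-\mu_{m+1})\mu_m+\mu_{m+1}\mu_m'. \]

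Set $\phi:=(1-\mu_{m+1})\tilde f+\mu_{m+1}f'$ on $\Z_m$. By linearity of $\mathcal{B}_m$, $g_{m,\phi}=(1-\mu_{m+1})g_{m,\tilde f}+\mu_{m+1}g_{m,f'}$. The convex-combination identity for the centering above implies that $g_{m,\phi}$ satisfies exactly the same equation as $g_{m+1,f}$ on $\{m+2,m+3,\ldots\}$, namely
\[ \sum_{j\ge1} j\lambda_j g(i+j)-i g(i)=f(i)-\mu_{m+1}. \]
Uniqueness of the bounded solution of this backward recursion on $\{m+2,m+3,\ldots\}$ (which is a contraction once $i>\sum_j j\lambda_j$, and is in any case guaranteed by the finiteness of the Stein factors under our hypothesis) then forces $g_{m+1,f}$ and $g_{m,\phi}$ to agree there, so that for every $i\ge m+2$,
\[ g_{m+1,f}(i)=(1-\mu_{m+1})\,g_{m,\tilde f}(i)+\mu_{m+1}\,g_{m,f'}(i), \]
and the same convex combination holds for $\Delta g_{m+1,f}(i)$.

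Applying convexity of $|\cdot|$ pointwise gives $|g_{m+1,f}(i)|\le\max(|g_{m,\tilde f}(i)|,|g_{m,f'}(i)|)$ for all $i\ge m+2$, and likewise for the forward differences. Because the index range $\{w\in\Z_{m+1}\}$ defining $\|\cdot\|_{m+1}$ is a subset of $\{w\in\Z_m\}$ and both $\tilde f,f'\in\mathcal{F}_m$, passing to suprema yields
\[ \|g_{m+1,f}\|_{m+1}\le\max(\|g_{m,\tilde f}\|_m,\|g_{m,f'}\|_m)\le G_{m,1}, \]
and in the same way $\|\Delta g_{m+1,f}\|_{m+1}\le G_{m,2}$. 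Taking the final supremum over $f\in\mathcal{F}_{m+1}$ establishes monotonicity of both $G_{m,1}$ and $G_{m,2}$ in $m$.

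The main obstacle is the uniqueness step used to identify $g_{m+1,f}$ with $g_{m,\phi}$ on $\{m+2,m+3,\ldots\}$: the homogeneous equation $\sum_j j\lambda_j h(i+j)=ih(i)$ must admit only the zero bounded solution on this range. The monotonicity hypothesis ``$i\lambda_i$ decreasing'' is what makes $G_{m,1}$ and $G_{m,2}$ finite in the first place (as flagged in the discussion preceding the theorem), and this finiteness is precisely what delivers the required uniqueness, so the rest of the argument is purely a convexity computation.
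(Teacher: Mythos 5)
Your argument is correct, but it follows a genuinely different route from the paper's. The paper proves the statement probabilistically: it sets $g_{m,f}(i)=\Delta h_{m,f}(i-1)$, interprets the Stein operator through the generator $\mathcal{A}_m$ of a Markov process whose upward jump rates $j\lambda_j-(j+1)\lambda_{j+1}$ are nonnegative precisely because $i\lambda_i$ is decreasing, and then compares levels $m$ and $m-1$ via the strong Markov property and the hitting times $\tau^{\resm}_{i+1,i}$, using the inequality $\E f(\cprv^{\resm})\ge \E f(\cprv^{\resmone})$ for $f\in\mathcal{F}_m$ together with a sign normalisation and, for $G_{m,2}$, a coupling showing $\E\tau^{\resm}_{i+2,i+1}\le\E\tau^{\resm}_{i+1,i}$. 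You instead exploit linearity of the Stein equation \Ref{SteineqCP}: the identity $\mu_{m+1}=(1-\mu_{m+1})\mu_m+\mu_{m+1}\mu_m'$ makes $(1-\mu_{m+1})g_{m,\tilde f}+\mu_{m+1}g_{m,f'}$ satisfy the level-$(m+1)$ equation at every $i\ge m+2$, and since the norms in \Ref{G1G2} at level $m+1$ only see values of $g_{m+1,f}$ at $i\ge m+2$, convexity finishes both assertions at once. Two points deserve tightening: the uniqueness step should be justified by the contraction-plus-downward-substitution argument you allude to (for bounded $h$ with $ih(i)=\sum_j j\lambda_j h(i+j)$, $i\ge m+2$, the supremum over $i\ge I$ contracts once $I>\sum_j j\lambda_j$, and the remaining values vanish by substituting downwards), not by ``finiteness of the Stein factors'', which by itself gives nothing; and you should state explicitly that you use $\sum_j j\lambda_j<\infty$ and the boundedness of $g_{m,\tilde f},g_{m,f'},g_{m+1,f}$, which comes from the Barbour--Chen--Loh-type existence argument the paper cites, not from the monotonicity of $i\lambda_i$. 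Indeed, a notable feature of your proof is that it never uses the hypothesis that $i\lambda_i$ decreases, so it gives the monotonicity of $G_{m,1}$ and $G_{m,2}$ for general compound Poisson with finite cluster mean; what it does not provide is the probabilistic representation of the solution, which the paper reuses later (e.g.\ in the proof of Lemma~\ref{uncond-cond-thm}).
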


\begin{proof} By setting $g_{m,f}(i) = \Delta h_{m,f}(i-1)$ (see Barbour (1998) and Barbour, Chen \& Loh~(1992)), if we assume $i\L_i$ is decreasing in $i$, then the form of Stein's identity in \Ref{steinidentityCP} naturally leads to a generator interpretation 
with generator
\begin{align*}
\mathcal{A}_mh_{m,f}(i)
	&:= \sum_{j=1}^\infty \left[\left( j \L_j - (j+1) \L_{j+1}\right) (h_{m,f}(i+j) - h_{m,f}(i))\right] \notag\\
	&\ \ \ \ +i(h_{m,f}(i-1) - h_{m,f}(i)) \ind_{i > m},\ i\in\Z_m.
\end{align*}
Furthermore, it can be verified that the solution for $h_{m,f}(i)$ to Stein equation 
\begin{align*} \mathcal{A}_mh_{m,f}(i)=\mathcal{B}_mg_{m,f}(i)= f(i) - \CP^{\resm}(\bm\L)\{f\},\end{align*}
is
\begin{align*}
h_{m,f}(i) = -\int_0^\infty \left( \E f(Z_i^{\resm}(t)) - \E f(\cprv ^{\resm}) \right) dt,
\end{align*}
where $Z_i^{\resm}$ is a birth-death process with generator $\mathcal{A}$ and initial value $Z_i^{\resm}(0)=i$.
Using the strong Markov property of the birth-death process, we obtain
\begin{align}
h_{m,f}(i+1) &= -\int_0^\infty \left( \E f(Z_{i+1}^{\resm}(t)) - \E f(\cprv ^{\resm})\right)dt\notag\\
	&= -\E\int_0^{\tau_{i+1,i}^{\resm}} \left( f(Z_{i+1}^{\resm}(t)) - \E f(\cprv ^{\resm})\right)dt + h_{m,f}(i), \label{gdecomp}
\end{align}
where $\tau_{i+1,i}^{\resm} = \inf\{t : Z_{i+1}^{\resm}(t) = i\}$. One can replace $f \in \cF_m$ with $\ind_{\Z_m} - f$ to show that 
$$G_{m,1} = \sup_{f \in \cF_m} \sup_{i \in \Z_m} g_{m,f}(i+1)=-\inf_{f \in \cF_m} \inf_{i \in \Z_m} g_{m,f}(i+1),$$
hence, we can assume that $h_{m,f}(i+1) - h_{m,f}(i) \leq 0$. Now,
\begin{align}
h_{m,f}(i+1) - h_{m,f}(i) &= -\E\int_0^{\tau_{i+1,i}^{\resm}} \left( f(Z_{i+1}^{\resm}(t)) - \E f(\cprv ^{\resm})\right)dt\notag\\
	&=  -\E\int_0^{\tau_{i+1,i}^{\resm}} \left(  f(Z_{i+1}^{\resm}(t)) - \E f(\cprv ^{\resmone})\right)dt\notag\\
	&\phantom{VV} + \left[ \E f(\cprv ^{\resm}) - \E f(\cprv ^{\resmone}) \right] \E \tau_{i+1,i}^{\resm}\notag\\
	&= h_{m-1,f}(i+1) - h_{m-1,f}(i)\notag\\
	&\phantom{VV} + \left[ \E f(\cprv ^{\resm}) - \E f(\cprv ^{\resmone}) \right] \E \tau_{i+1,i}^{\resm},\label{conditionalCP01}
\end{align}
where the last equality is because $(Z_{i+1}^{\resm}(\cdot)\ind_{\cdot<\tau_{i+1,i}^{\resm}},\tau_{i+1,i}^{\resm})\stackrel{d}{=}(Z_{i+1}^{\resmone}(\cdot)\ind_{\cdot<\tau_{i+1,i}^{\resmone}},\tau_{i+1,i}^{\resmone})$ for $i\ge m$.
Noting that $f\in\mathcal{F}_m$, we have
\begin{align}
\E f(\cprv ^{\resm}) - \E f(\cprv ^{\resmone}) 
	=\frac{\E f(\cprv ) }{\Pr(\cprv  \geq m)} - \frac{\E f(\cprv ) }{\Pr(\cprv  \geq m-1)}
	\geq 0.\label{conditionalCP02}
\end{align}
This, together with \Ref{conditionalCP01}, implies $h_{m-1,f}(i+1) - h_{m-1,f}(i) \leq h_{m,f}(i+1) - h_{m,f}(i)$. Therefore,
\begin{align*}
-G_{m,1} &= \inf_{f \in \cF_m} \inf_{i \in \Z_m} \left(h_{m,f}(i+1) - h_{m,f}(i)\right)\\
	&\geq  \inf_{f \in \cF_m} \inf_{i \in \Z_m} \left(h_{m-1,f}(i+1) - h_{m-1,f}(i)\right)\\
	&\geq \inf_{f \in \cF_{m-1}} \inf_{i \in \Z_{m-1}} \left(h_{m-1,f}(i+1) - h_{m-1,f}(i) \right)= -G_{m-1,1}.
\end{align*}

The proof of the monotonicity of $G_{m,2}$ is similar. Again replacing $f \in \cF_m$ with $1_{\Z_m}-f$ if necessary, we can prove $G_{m,2} = \sup_{f \in \cF_m} \sup_{i \in \Z_m} \Delta g_{m,f}(i+1)=-\inf_{f \in \cF_m} \inf_{i \in \Z_m} \Delta g_{m,f}(i+1)$. Hence we can assume $\Delta_2h_{m,f}(i):= h_{m,f}(i+2) - 2h_{m,f}(i+1) + h_{m,f}(i) \geq 0$.
Arguing in the same way as for \Ref{conditionalCP01}, we have
\begin{align*}
\Delta_2h_{m,f}(i) &= - \E \int_0^{\tau_{i+2,i+1}^{\resm}} (f(Z_{i+2}^{\resm}(t)) - \E f(\cprv ^{\resm}))dt\\
	&\phantom{V} + \E \int_0^{\tau_{i+1,i}^{\resm}} (f(Z_{i+1}^{\resm}(t)) - \E f(\cprv ^{\resm}))dt\\
	&= - \E \int_0^{\tau_{i+2,i+1}^{\resm}} (f(Z_{i+2}^{\resm}(t)) - \E f(\cprv ^{\resmone}))dt\\
	&\phantom{V} + \E \int_0^{\tau_{i+1,i}^{\resm}} (f(Z_{i+1}^{\resm}(t)) - \E f(\cprv ^{\resmone}))dt\\
	&\phantom{V} +(\E f(\cprv ^{\resm}) - \E f(\cprv ^{\resmone}))(\E \tau_{i+2,i+1}^{\resm} - \E \tau_{i+1,i}^{\resm})\\
	&= \Delta_2h_{m-1,f}(i) + (\E f(\cprv ^{\resm}) - \E f(\cprv ^{\resmone}))(\E \tau_{i+2,i+1}^{\resm} - \E \tau_{i+1,i}^{\resm}).
\end{align*}
It can be shown via a coupling that $\E \tau_{i+2,i+1}^{\resm} - \E \tau_{i+1,i}^{\resm} \leq 0$. Hence, it follows from \Ref{conditionalCP02} that $\Delta_2h_{m,f}(i) \leq \Delta_2h_{m-1,f}(i)$, which ensures
\begin{align*}
G_{m,2} &= \sup_{f \in \cF_m} \sup_{i \in \Z_m} \Delta_2h_{m,f}(i+1)\leq  \sup_{f \in \cF_m} \sup_{i \in \Z_m} \Delta_2h_{m-1,f}(i+1)\\
	&\leq \sup_{f \in \cF_{m-1}} \sup_{i \in \Z_{m-1}} \Delta_2h_{{m-1},f}(i+1) = G_{m-1,2}.
\end{align*}
\end{proof}
As a direct consequence of the above theorem, we can use the bounds for compound Poisson approximation given in Barbour, Chen \& Loh (1992) to give crude estimates of $G_{m,1}$ and $G_{m,2}$.
\begin{corollary}
If $j \L_j$ is a decreasing function of $j$, then for any $m$,
\begin{align*}
G_{m,1} &\leq \begin{cases} 1 & \text{if } \L_1 - 2\L_2 \leq 1
\\
(1/\sqrt{\L_1 - 2\L_2})\left[ 2 - (1/\sqrt{\L_1-2\L_2}) \right] & \text{if }\L_1 - 2\L_2 > 1\end{cases},\\
G_{m,2} &\leq 1 \wedge \frac{1}{\L_1 - 2\L_2} \left[ \frac{1}{4(\L_1 - 2\L_2)} + \log^+2(\L_1 - 2\L_2) \right].
\end{align*}
\end{corollary}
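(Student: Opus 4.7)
The plan is to derive this corollary as a direct consequence of Theorem~\ref{thmCPdecreasing}. Since both $G_{m,1}$ and $G_{m,2}$ are decreasing in $m$ under the monotonicity assumption on $j\Lambda_j$, it suffices to establish the stated inequalities at $m=0$, that is, to show that $G_{0,1}$ and $G_{0,2}$ are bounded by the quantities on the right-hand side.

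The next step is to identify $G_{0,1}$ and $G_{0,2}$ with the standard Stein factors for unconditional compound Poisson approximation. Taking $m=0$ in the Stein identity \Ref{steinidentityCP}, one has $\Z_0=\Z_+$, the indicator $\ind_{W>0}$ coincides with $\ind_{W\ge 1}$, and the conditional distribution $\CP^{(0)}(\bm\Lambda)$ is just $\CP(\bm\Lambda)$ itself (a compound Poisson variable is automatically nonnegative). Consequently Stein's equation \Ref{SteineqCP} at $m=0$ is precisely the classical Stein equation for $\CP(\bm\Lambda)$ used in Barbour, Chen \& Loh~(1992), and $G_{0,1}$, $G_{0,2}$ are exactly their $\sup\|g_f\|$ and $\sup\|\Delta g_f\|$.

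Finally, I would invoke the bounds of Barbour, Chen \& Loh~(1992) which, under the assumption that $j\Lambda_j$ is decreasing (equivalently, the monotone canonical condition that makes their birth-death coupling available), give
\[
\sup_{f\in\cF_0}\|g_f\|\le \begin{cases}1, & \Lambda_1-2\Lambda_2\le 1,\\ \frac{1}{\sqrt{\Lambda_1-2\Lambda_2}}\bigl[2-\frac{1}{\sqrt{\Lambda_1-2\Lambda_2}}\bigr], & \Lambda_1-2\Lambda_2>1,\end{cases}
\]
and
\[
\sup_{f\in\cF_0}\|\Delta g_f\|\le 1\wedge \frac{1}{\Lambda_1-2\Lambda_2}\Bigl[\tfrac{1}{4(\Lambda_1-2\Lambda_2)}+\log^+2(\Lambda_1-2\Lambda_2)\Bigr].
\]
Combining these with Theorem~\ref{thmCPdecreasing} yields the stated bounds for all $m\ge 0$.

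The only subtlety, and the one place where care is needed, is checking that the equivalence between $G_{0,1}$, $G_{0,2}$ and the classical Stein factors is faithful: in particular, that the supremum over $f\in\cF_0=\{\ind_A:A\subset\Z_+\}$ matches the class considered in Barbour, Chen \& Loh~(1992), and that the convention $g(0)$ (undetermined at $i=m=0$ in our setup) does not affect $\|g\|_0=\sup_{w\ge 0}|g(w+1)|$ or $\|\Delta g\|_0=\sup_{w\ge 0}|g(w+2)-g(w+1)|$. Once these identifications are made, the corollary is immediate.
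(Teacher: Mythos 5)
Your proposal is correct and is essentially the paper's own argument: the paper presents this corollary as a direct consequence of Theorem~\ref{thmCPdecreasing} (monotonicity of $G_{m,1}$, $G_{m,2}$ in $m$) combined with the classical $m=0$ Stein-factor bounds of Barbour, Chen \& Loh~(1992), which is exactly your reduction. Your extra check that the $m=0$ conditional setup coincides with the unconditional compound Poisson Stein equation is a sensible (and faithful) verification of the identification the paper takes for granted.
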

We will see in the next subsection that the bounds can be improved for conditional Poisson and conditional negative binomial approximations. However, in the general case, it remains a challenging open problem to find the optimal estimates of $G_{m,1}$ and $G_{m,2}$. 

\subsection{Case 2: Conditional Negative Binomial Approximation}
We use the following parameterisation of the negative binomial distribution. Let a random variable $Z$ have negative binomial distribution with parameters $r$ and $p$, denoted by $\Nb(r,p)$, if it has mass function
\[ \Pr(Z = k) = \frac{\Gamma(r+k)}{\Gamma(r)k!} (1-p)^r p^k, \text{ }k = 0,1,\ldots; r>0, 0<p<1.\]
The negative binomial distribution can be viewed as a compound Poisson distribution with cluster distribution following 
a logarithmic distribution (see Johnson et al.~(2005), p.~223). It can also be considered as the stationary distribution of a birth-death process with linear birth rate and unit per capita death rate (Phillips (1996)). The latter consideration leads us to the following observation.
 
\begin{lem}
For a non-negative integer $m$, $W \sim \NB^{\resm}(r,p)$ if and only if for all bounded functions $g_m$ on $\Z_m$,
\begin{align}
\E\left[p(r+W)g_m(W+1) - Wg_m(W)\ind_{W>m} \right] = 0. \label{steinidentity}
\end{align}
\end{lem}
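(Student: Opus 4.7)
The proof plan rests on recognising $\NB^{\resm}(r,p)$ as the equilibrium distribution of the birth-death chain on $\Z_m$ with birth rate $p(r+i)$ at state $i$ and death rate $i\ind_{i>m}$, so that the chain cannot leave $\Z_m$ downwards. Once that identification is in place, the Stein identity \Ref{steinidentity} is simply the statement that the generator of this chain has mean zero under its invariant law, i.e.\ detailed balance in integrated form. The connection to the unconditional negative binomial chain (with death rate $i$ throughout) is noted already in the sentence immediately preceding the lemma.

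For the \emph{only if} direction, let $\pi_i:=\Pr(W=i)$ for $i\ge m$. Since conditioning on $\{Z\ge m\}$ only renormalises the pmf of a $\Nb(r,p)$ variable $Z$, the ratio property
\begin{equation*}
(i+1)\,\pi_{i+1}\;=\;p(r+i)\,\pi_i,\qquad i\ge m,
\end{equation*}
is immediate from the explicit form of the negative binomial masses. I would then expand
\begin{equation*}
\E\bigl[p(r+W)g_m(W+1)\bigr]\;=\;\sum_{i\ge m}\pi_i\,p(r+i)\,g_m(i+1),
\end{equation*}
shift the summation index to $j=i+1$, invoke the ratio identity to replace $\pi_i\,p(r+i)$ by $j\pi_j$, and recognise the resulting sum $\sum_{j\ge m+1}j\pi_j g_m(j)$ as $\E[Wg_m(W)\ind_{W>m}]$. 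Subtracting gives \Ref{steinidentity}.

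For the \emph{if} direction, I would test the identity against the indicator functions $g_m=\ind_{\{k\}}$ for each $k\ge m$. The case $k=m$ is vacuous: both $\ind_{\{W+1=m\}}$ and $\ind_{\{W=m\}}\ind_{W>m}$ vanish on $\Z_m$. The case $k=m+1$ yields $p(r+m)\pi_m=(m+1)\pi_{m+1}$, and each $k\ge m+2$ yields $p(r+k-1)\pi_{k-1}=k\pi_k$. Together with $\sum_{k\ge m}\pi_k=1$, this recursion determines $\pi$ uniquely, and a direct check confirms that the conditional negative binomial pmf is a solution; hence $W\sim\NB^{\resm}(r,p)$.

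No substantive obstacle is anticipated: the whole argument is a detailed-balance bookkeeping exercise. The only place deserving care is the boundary at $m$, where the indicator $\ind_{W>m}$ precisely kills the would-be downward transition from $m$ and makes the recurrence for $\pi_i$ start from $i=m+1$ rather than from $i=m$, which is exactly the renormalisation forced by the conditioning.
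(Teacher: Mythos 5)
Your proposal is correct and follows essentially the route the paper intends: the paper states the lemma as a direct verification coming from the birth--death interpretation of $\NB^{\resm}(r,p)$, and your detailed-balance computation $(i+1)\pi_{i+1}=p(r+i)\pi_i$ for $i\ge m$, together with testing against indicators $\ind_{\{k\}}$ to recover the recursion and uniqueness for the converse, is exactly that verification. The boundary observation that the case $k=m$ is vacuous (matching the indicator $\ind_{W>m}$) is the one delicate point, and you handle it correctly.
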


Again we solve for the function $g_m := g_{m,f}$ that satisfies Stein's equation
\begin{align} \B_mg_m(i):=p(r+i)g_m(i+1) - ig_m(i)\ind_{i>m} = f(i) - \NB^{\resm}(r,p)\{f\},\label{Steineq}\end{align}
where $\NB^{\resm}(r,p)\{f\} := \E(f(Z^{\resm}))$ with $Z^{\resm} \sim \NB^{\resm}(r,p)$, and calculate bounds for $G_{m,1}$ and $G_{m,2}$.

\begin{theorem}\label{conditonalNBth1}
For conditional negative binomial approximation, both $G_{m,1}$ and $G_{m,2}$ are decreasing in $m$.
\end{theorem}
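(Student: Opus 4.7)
The plan is to mirror the proof of Theorem~\ref{thmCPdecreasing}, now for the birth-death process whose stationary distribution is $\NB^{\resm}(r,p)$. The first step is to identify the generator: setting $g_{m,f}(i)=h_{m,f}(i)-h_{m,f}(i-1)$ in the Stein equation \Ref{Steineq} turns $\B_m$ into
\begin{equation*}
\cA_m h(i) := p(r+i)(h(i+1)-h(i)) + i(h(i-1)-h(i))\ind_{i>m}, \qquad i\in\Z_m,
\end{equation*}
the generator of a birth-death process $\{Z_i^{\resm}(t)\}$ with linear birth rate $p(r+i)$ and unit per-capita death rate, reflected above $m$. A detailed-balance check confirms $\NB^{\resm}(r,p)$ as its stationary distribution, so the solution admits the probabilistic representation
\begin{equation*}
h_{m,f}(i) = -\int_0^\infty \left(\E f(Z_i^{\resm}(t)) - \E f(Z^{\resm})\right)dt,\qquad Z^{\resm}\sim\NB^{\resm}(r,p).
\end{equation*}

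For $G_{m,1}$, the argument runs in parallel with the CP case. After replacing $f$ by $\ind_{\Z_m}-f$ if necessary to assume $h_{m,f}(i+1)-h_{m,f}(i)\le 0$, I would observe that $\cA_m$ and $\cA_{m-1}$ agree on states $\ge m+1$, so the distributional identity $(Z_{i+1}^{\resm}(\cdot)\ind_{\cdot<\tau_{i+1,i}^{\resm}},\tau_{i+1,i}^{\resm})\stackrel{d}{=}(Z_{i+1}^{\resmone}(\cdot)\ind_{\cdot<\tau_{i+1,i}^{\resmone}},\tau_{i+1,i}^{\resmone})$ holds for $i\ge m$. The strong-Markov decomposition leading to \Ref{conditionalCP01} then gives
\begin{equation*}
h_{m,f}(i+1)-h_{m,f}(i) = h_{m-1,f}(i+1)-h_{m-1,f}(i) + (\E f(Z^{\resm})-\E f(Z^{\resmone}))\E\tau_{i+1,i}^{\resm}.
\end{equation*}
Since \Ref{conditionalCP02} still delivers $\E f(Z^{\resm})-\E f(Z^{\resmone})\ge 0$ for $f\in\cF_m$, and $\cF_m\subset\cF_{m-1}$, $\Z_m\subset\Z_{m-1}$, the inf/sup manipulation used in Theorem~\ref{thmCPdecreasing} delivers $G_{m,1}\le G_{m-1,1}$.

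The bound for $G_{m,2}$ follows the same template, applied to the second difference $\Delta_2 h_{m,f}(i):=h_{m,f}(i+2)-2h_{m,f}(i+1)+h_{m,f}(i)$, yielding
\begin{equation*}
\Delta_2 h_{m,f}(i) = \Delta_2 h_{m-1,f}(i) + (\E f(Z^{\resm})-\E f(Z^{\resmone}))(\E\tau_{i+2,i+1}^{\resm}-\E\tau_{i+1,i}^{\resm}).
\end{equation*}
The main technical obstacle is to verify $\E\tau_{i+2,i+1}^{\resm}\le\E\tau_{i+1,i}^{\resm}$. Setting $T_j:=\E\tau_{j+1,j}^{\resm}$, the standard birth-death hitting-time formula gives $T_j=\Pr(Z\ge j+1)/[(j+1)\Pr(Z=j+1)]$ with $Z\sim\NB(r,p)$. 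A short manipulation using the NB pmf recurrence $(k+1)\Pr(Z=k+1)=p(r+k)\Pr(Z=k)$, summed from $k=j+1$, shows that $T_j-T_{j+1}$ is a positive multiple of $(1-p)\E[(Z-(j+1))\ind_{Z\ge j+1}]$, which is non-negative. With this sign in hand, the conclusion $G_{m,2}\le G_{m-1,2}$ follows exactly as in the CP proof.
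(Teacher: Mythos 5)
Your proof is correct and takes exactly the route the paper intends: it transplants the argument of Theorem~\ref{thmCPdecreasing} to the birth-death generator with birth rate $p(r+i)$ and unit per-capita death rate, which is precisely why the paper omits the proof as ``essentially the same''. The only place you go beyond the template is in verifying $\E\tau_{i+2,i+1}^{\resm}\le\E\tau_{i+1,i}^{\resm}$ explicitly via the hitting-time formula rather than by the coupling alluded to in the compound Poisson case; your identity expressing the difference as a positive multiple of $(1-p)\E[(Z-(j+1))\ind_{\{Z\ge j+1\}}]$ checks out, so this step is a valid (and more explicit) substitute.
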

We omit the proof of this theorem as it is essentially the same as the proof of Theorem~\ref{thmCPdecreasing}.

Using the bound for $G_{0,1}$ in Brown and Phillips (1999), and the previous theorem we achieve the following. 
\begin{corollary}
For conditional negative binomial approximation, the solution to Stein's equation \Ref{Steineq} satisfies
\begin{align*} 
G_{m,1} \leq \frac{1}{1-p} \wedge \frac{1.75}{\sqrt{rp(1-p)}}.
\end{align*}
\end{corollary}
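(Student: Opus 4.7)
The plan is to reduce the corollary to the unconditional ($m=0$) case and then quote the known Stein factor of Brown and Phillips (1999). By Theorem~\ref{conditonalNBth1} the map $m\mapsto G_{m,1}$ is non-increasing, so it suffices to show that
\[G_{0,1} \le \frac{1}{1-p} \wedge \frac{1.75}{\sqrt{rp(1-p)}}.\]

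For this second step I would verify that $G_{0,1}$ is exactly the Stein factor for the \emph{ordinary} negative binomial distribution. When $m=0$ the conditioning is vacuous, since $\Pr(Z\ge 0)=1$, so $\NB^{(0)}(r,p)=\NB(r,p)$. Moreover the indicator $\ind_{i>0}$ in \Ref{Steineq} is multiplied by the factor $i$, which already vanishes at $i=0$; hence on $\Z_0=\{0,1,2,\dots\}$ Stein's equation collapses to the classical negative binomial Stein equation
\[p(r+i)g_{0,f}(i+1) - i g_{0,f}(i) = f(i) - \NB(r,p)\{f\}, \qquad i\ge 0,\]
studied by Brown and Phillips (1999). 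Their bound on $\|g_{0,f}\|_0$, taken in supremum over $f\in\cF_0$, gives the desired estimate on $G_{0,1}$, and then the monotonicity propagates it to every $m\ge 1$.

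There is essentially no obstacle here: the heavy lifting was carried out in Theorem~\ref{conditonalNBth1}, and the unconditional bound is imported verbatim from the literature. The only point that requires a moment's attention is the identification of the $m=0$ conditional Stein equation with the unconditional one, which is justified by the coincidence of $i\ind_{i>0}$ and $i$ on $\Z_+$.
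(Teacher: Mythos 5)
Your proposal is correct and follows exactly the route the paper intends: the paper's justification is precisely to combine the monotonicity of $G_{m,1}$ in $m$ from Theorem~\ref{conditonalNBth1} with the unconditional bound of Brown and Phillips (1999) for $G_{0,1}$, noting as you do that for $m=0$ the conditional Stein equation coincides with the classical negative binomial one. Your extra remark identifying $\NB^{\resone[0]}$-type conditioning as vacuous at $m=0$ simply makes explicit what the paper leaves implicit.
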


Unlike in {the compound Poisson case} where we use the unconditional bound of $G_{0,2}$ to bound $G_{m,2}$, we can obtain a tight bound of $G_{m,2}$ {for negative binomial approximation}.

\begin{theorem}\label{thmdeltag} For conditional negative binomial approximation, the solution to Stein's equation \Ref{Steineq} satisfies
\begin{align} G_{m,2} = \frac{\Pr(Z > m)}{p(r+m)\Pr(Z \geq m)},\label{deltag}\end{align}
where $Z \sim \NB(r,p)$.
\end{theorem}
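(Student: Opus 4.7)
The plan is to exploit the negative-binomial identity $(k+1)\pi_{k+1}=p(r+k)\pi_k$, where $\pi_k=\Pr(Z=k)$, to reduce Stein's equation \Ref{Steineq} to a closed-form expression for $g_{m,f}$. After multiplying the equation at index $i$ by $\pi_i$ and applying this identity, the term $i\pi_i g_{m,f}(i)$ matches a shifted counterpart, and telescoping from $i=m$ up to $n$ yields
\[
g_{m,f}(n+1)=\frac{1}{(n+1)\pi_{n+1}}\sum_{k=m}^{n}\pi_k\bigl(f(k)-\mu\bigr),\qquad n\geq m,
\]
with $\mu=\NB^{\resm}(r,p)\{f\}$. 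Because $(n+1)\pi_{n+1}=p(r+n)\pi_n$, the denominator also admits the alternative form $p(r+n)\pi_n$, which is useful in the subsequent algebra.

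Next, I would restrict to $f=\ind_A$ for $A\subset\Z_m$. Writing $a=\Pr(Z\in A\cap[m,n])$ and $b=\Pr(Z\in A\cap(n,\infty))$, and using $\mu=(a+b)/\Pr(Z\geq m)$, a direct rearrangement shows that $T(n):=\sum_{k=m}^{n}\pi_k(f(k)-\mu)=[a\,\Pr(Z>n)-b\,\Pr(m\leq Z\leq n)]/\Pr(Z\geq m)$. Combined with the constraints $0\leq a\leq\Pr(m\leq Z\leq n)$ and $0\leq b\leq\Pr(Z>n)$, this yields the pointwise bound $|T(n)|\leq\Pr(m\leq Z\leq n)\Pr(Z>n)/\Pr(Z\geq m)$, attained at $A=[m,n]$ or $A=(n,\infty)$.

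The third and decisive step is to translate this into a bound on $\Delta g_{m,f}(n+1)=g_{m,f}(n+2)-g_{m,f}(n+1)$. Using the explicit formula at both $n+1$ and $n$, together with $T(n+1)=T(n)+\pi_{n+1}(f(n+1)-\mu)$ and the recurrence $(n+2)\pi_{n+2}=p(r+n+1)\pi_{n+1}$, one rewrites $\Delta g_{m,f}(n+1)$ as a linear combination of $T(n)/[p(r+n)\pi_n]$ and $(f(n+1)-\mu)/p(r+n+1)$. The algebra is then arranged so that the negative-binomial structure forces the resulting expression to be bounded uniformly by $\Pr(Z>m)/[p(r+m)\Pr(Z\geq m)]$. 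For sharpness, take $f=\ind_{\{m\}}$: Stein's equation at $i=m$ gives directly $g_{m,f}(m+1)=(1-\pi_m/\Pr(Z\geq m))/p(r+m)=\Pr(Z>m)/[p(r+m)\Pr(Z\geq m)]$, which realizes the claimed value within the supremum defining $G_{m,2}$.

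The main obstacle lies in the decisive step. The pointwise bound from Step~2 is not obviously maximized at $n=m$, so one must combine the two $T$-type sums appearing in $\Delta g_{m,f}(n+1)$ and show that, after dividing by the denominators supplied by the negative-binomial recurrence, the resulting quantity collapses cleanly to the claimed constant. Carefully identifying the extremizing pair $(f,n)$ and ruling out larger contributions from other indices is the technical heart of the argument.
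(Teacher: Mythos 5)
Your telescoping formula $g_{m,f}(n+1)=\bigl[(n+1)\pi_{n+1}\bigr]^{-1}\sum_{k=m}^{n}\pi_k\bigl(f(k)-\mu\bigr)$ and the expression for $T(n)$ are both correct, and this explicit-solution route is genuinely different from the paper, which instead sets $g_m(i)=h_m(i)-h_m(i-1)$, reads \Ref{steinidentity} as the generator $\A_m h_m(i)=p(r+i)(h_m(i+1)-h_m(i))+i(h_m(i-1)-h_m(i))\ind_{i>m}$ of a birth--death process with stationary law $\NB^{\resm}(r,p)$, and imports the \emph{exact} formula $\sup_{f\in\cF_m}|\Delta g_{m,f}(i)|=\frac{1-\pi_m^{\resm}-\cdots-\pi_i^{\resm}}{p(r+i)}+\frac{\pi_m^{\resm}+\cdots+\pi_{i-1}^{\resm}}{i}$ from Theorem~2.10 of Brown \& Xia~(2001); after that, only the balance equations $(j+1)\pi_{j+1}^{\resm}=p(r+j)\pi_j^{\resm}$ are needed to see that the maximum over $i$ is at the bottom state. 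The trouble is that your ``decisive step'' is precisely the content of that imported theorem, and you have not supplied it. Applying your pointwise bound $|T(n)|\le\Pr(m\le Z\le n)\Pr(Z>n)/\Pr(Z\ge m)$ to the two terms of $\Delta g_{m,f}(n+1)=T(n+1)/[(n+2)\pi_{n+2}]-T(n)/[(n+1)\pi_{n+1}]$ via the triangle inequality provably overshoots: already at $n=m$ the second term alone is bounded by $\Pr(Z>m)/[p(r+m)\Pr(Z\ge m)]$ with equality for $A=\{m\}$, and the first term adds a strictly positive amount, so the claimed constant cannot be reached this way. The extremal sets for the two $T$-sums are different, and what is actually needed is a sign analysis showing that for each fixed $n$ the supremum over $f\in\cF_m$ of $|\Delta g_{m,f}(n+1)|$ is attained at $f=\ind_{\{n+1,n+2,\dots\}}$ (equivalently $\ind_{[m,n]}$), where the two contributions reinforce rather than cancel; this yields the two-term formula quoted above, and only then do the balance equations give the comparison with the value at the bottom. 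Saying that ``the negative-binomial structure forces'' the bound is not an argument, and you acknowledge yourself that identifying the extremizing pair and ruling out other indices is the technical heart --- that heart is missing.

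A secondary point concerns sharpness. $G_{m,2}$ is a supremum of first differences $\Delta g_{m,f}$, so your computation $g_{m,f}(m+1)=\Pr(Z>m)/[p(r+m)\Pr(Z\ge m)]$ for $f=\ind_{\{m\}}$ witnesses the claimed value only under a convention such as $g_{m,f}(m)=0$ (recall the solution of \Ref{Steineq} is unique except at $i=m$), so that this quantity really is a value of $\Delta g_{m,f}$; the same convention underlies the paper's remark that \Ref{CNB01} becomes an equality at $i=m$, but you should make it explicit. In summary, your framework (explicit telescoped solution, reduction to $T(n)$) is sound and, if the extremal-$f$ analysis were carried out, would give a self-contained alternative to citing Brown \& Xia; as written, however, the upper bound --- the whole substance of Theorem~\ref{thmdeltag} --- is not established.
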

\begin{proof}
Similarly to the previous subsection, by setting $g_m(i) = h_m(i) - h_m(i-1)$, the form of Stein's identity in \Ref{steinidentity} leads to a generator interpretation with generator
\begin{align*}\A_m h_m(i):=p(r+i)(h_m(i+1)-h_m(i)) + i(h_m(i-1)-h_m(i))\ind_{i>m}.\end{align*}
It is a routine exercise to show that the stationary distribution of a process with this generator is $\NB^{\resm}(r,p)$.
Noting that our process satisfies (C4) in Brown \& Xia~(2001), we obtain from Theorem~2.10 of Brown \& Xia~(2001) that
\begin{align*}
\sup_{f \in \mathcal{F}_m} |\Delta g_{m,f}(i)| 
	&= \frac{1 - \pi_m^{\resm} - \ldots - \pi_i^{\resm}}{p(r+i)} + \frac{\pi_m^{\resm} + \ldots + \pi_{i-1}^{\resm}}{i},
\end{align*}
where $\pi_j^{\resm} = \Pr(Z^{\resm} = j)$. Using the balance equations of stationary processes, we also have that $(j+1)\pi_{j+1}^{\resm} = p(r+j)\pi_j^{\resm}$. Therefore, rearranging the above equation gives
\begin{align}
\sup_{f \in \mathcal{F}_m} |\Delta g_{m,f}(i)| 
	&= \frac{1-\pi_m^{\resm}}{p(r+m)} + (\pi_{i+1}^{\resm} + \pi_{i+2}^{\resm} + \ldots)\frac{m-i}{p(r+i)(r+m)}\notag\\
	&\phantom{V}+\sum_{k=m}^{i-1} \left(\frac{\pi_k^{\resm}}{i} - \frac{\pi_{k+1}^{\resm}}{p(r+m)}\right)\notag\\
	&\leq \frac{1-\pi_m^{\resm}}{p(r+m)},\label{CNB01}
\end{align}
since $m\le i$ and for $m\le k\le i-1$,
$$\frac{\pi_k^{\resm}}{i} - \frac{\pi_{k+1}^{\resm}}{p(r+m)}= \pi_k^{\resm} \left( \frac{1}{i} - \frac{r+k}{(k+1)(r+m)}\right)\le \frac{\pi_k^{\resm}(m-k)}{(k+1)(r+m)}\le 0.
$$
Direct verification ensures that the inequality of \Ref{CNB01} becomes an equality when $i=m$.
\end{proof}

For conditional Poisson approximation, it is natural to set Stein's equation
\begin{align} \C_mg_m(i):=\L g_m(i+1) - ig_m(i)\ind_{i>m} = f(i) - \Po^{\resm}(\L)\{f\},\label{steineqPo}\end{align}
where $\Po^{\resm}(\L)\{f\} := \E(f(P^{\resm}))$ with $P^{\resm} \sim \Po^{\resm}(\L)$. We define $G_{m,1}$ and $G_{m,2}$ as in \Ref{G1G2}. 

\begin{corollary}\label{poissonbounds}
For conditional Poisson approximation, the solution to Stein's equation \Ref{steineqPo} satisfies
\[G_{m,1} \leq 1\wedge \sqrt{\frac{2}{\lambda e}} , \]
\[G_{m,2} = \frac{\Pr(P > m)}{\lambda\Pr(P \geq m)},\]
where $P \sim \Po(\lambda)$.
\end{corollary}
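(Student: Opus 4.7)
The plan is to view $\Po(\lambda)$ as the degenerate compound Poisson distribution with $\lambda_1=\lambda$ and $\lambda_i=0$ for $i\ge 2$, so that Stein's equation \Ref{steineqPo} coincides with the compound Poisson Stein equation $\mathcal{B}_m g_m$ of Section~\ref{sec:case1}, and the sequence $i\lambda_i$ is trivially decreasing in $i$. Theorem~\ref{thmCPdecreasing} then immediately gives that both $G_{m,1}$ and $G_{m,2}$ are decreasing in $m$.

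For the bound on $G_{m,1}$, I would use this monotonicity to reduce to $m=0$. At $m=0$ the indicator $\ind_{i>0}$ in \Ref{steineqPo} may be dropped (the factor $i$ already vanishes at $i=0$), and the equation becomes the standard unconditional Poisson Stein equation whose solution is bounded in supremum norm by the classical Stein--Chen factor $1\wedge\sqrt{2/(\lambda e)}$ of Barbour, Holst \& Janson~(1992). Hence $G_{m,1}\le G_{0,1}\le 1\wedge\sqrt{2/(\lambda e)}$.

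For the identity for $G_{m,2}$, I would repeat the argument in the proof of Theorem~\ref{thmdeltag}, with the birth--death generator there replaced by its Poisson analogue
\[\mathcal{A}_m h_m(i):=\lambda\bigl(h_m(i+1)-h_m(i)\bigr)+i\bigl(h_m(i-1)-h_m(i)\bigr)\ind_{i>m},\]
whose stationary distribution is $\Po^{\resm}(\lambda)$. Applying Theorem~2.10 of Brown \& Xia~(2001) verbatim yields
\[\sup_{f\in\mathcal{F}_m}|\Delta g_{m,f}(i)|=\frac{1-\sum_{k=m}^i\pi_k^{\resm}}{\lambda}+\frac{\sum_{k=m}^{i-1}\pi_k^{\resm}}{i},\]
where $\pi_k^{\resm}=\Pr(P^{\resm}=k)$. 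The Poisson balance relation $(k+1)\pi_{k+1}^{\resm}=\lambda\pi_k^{\resm}$ (valid for $k\ge m$) allows me to rewrite the right-hand side as
\[\frac{1-\pi_m^{\resm}}{\lambda}+\sum_{k=m}^{i-1}\pi_k^{\resm}\Bigl(\frac{1}{i}-\frac{1}{k+1}\Bigr),\]
and since $k+1\le i$ throughout the sum every summand is non-positive. The supremum over $i\ge m$ is therefore attained at $i=m$ (where the sum is empty), giving $G_{m,2}=(1-\pi_m^{\resm})/\lambda=\Pr(P>m)/(\lambda\Pr(P\ge m))$ as claimed.

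The argument is essentially a routine adaptation of the preceding theorems, so the main obstacle is only bookkeeping: verifying that the sign analysis in the last display forces the maximum to occur at $i=m$. This step is in fact cleaner than its counterpart \Ref{CNB01} in the negative binomial case, since the birth rate is the constant $\lambda$ rather than $p(r+i)$, and no genuinely new Stein-factor estimate is required.
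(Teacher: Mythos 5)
Your proof is correct. For the $G_{m,1}$ bound it is essentially the paper's argument: monotonicity of the Stein factor in $m$ combined with the classical unconditional factor $1\wedge\sqrt{2/(\lambda e)}$ at $m=0$; you obtain the monotonicity from Theorem~\ref{thmCPdecreasing} by viewing $\Po(\lambda)$ as the compound Poisson law with $\lambda_1=\lambda$ and $\lambda_i=0$ for $i\ge 2$, whereas the paper invokes the negative binomial version (Theorem~\ref{conditonalNBth1}); the two are interchangeable here since the Stein operator in \Ref{steineqPo} is the common special case. For $G_{m,2}$ you take a genuinely different route: the paper simply lets $r\to\infty$, $rp\to\lambda$ in the exact negative binomial identity of Theorem~\ref{thmdeltag}, while you rerun the Brown \& Xia Theorem~2.10 computation directly for the conditioned immigration--death generator with constant birth rate $\lambda$ and unit per capita death rate. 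Your rewriting via the balance relation $(k+1)\pi_{k+1}^{\resm}=\lambda\pi_k^{\resm}$ is correct, the summands $\pi_k^{\resm}\bigl(\tfrac1i-\tfrac1{k+1}\bigr)$ are indeed nonpositive for $m\le k\le i-1$, and the supremum equals $(1-\pi_m^{\resm})/\lambda=\Pr(P>m)/(\lambda\Pr(P\ge m))$; note that in the constant-birth-rate case the same value is already attained at $i=m+1$, so the equality does not hinge on the value at $i=m$, where $g_{m,f}$ is not uniquely determined. As for what each approach buys: the paper's limiting argument is shorter but leaves implicit the continuity of the Stein factor under $\NB(r,p)\to\Po(\lambda)$, which is what allows an exact equality to be transferred in the limit; your direct computation avoids that interchange of limits and delivers the identity outright, at the modest cost of re-verifying condition (C4) of Brown \& Xia for the Poisson generator, which is routine since constant birth and linear death rates form the canonical example.
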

\begin{proof} The bound of $G_{m,1}$ follows from Remark~3.4 of Barbour \& Brown~(1992) and Theorem~\ref{conditonalNBth1}. {For} 
the second estimate, since $\NB(r,p)$ approaches $\Pn(\L)$ when $r \rightarrow \infty$ and $rp \rightarrow \lambda$, therefore the bound follows by taking the limit in Theorem~\ref{thmdeltag}.
\end{proof}

\rem{The advantages of dealing with the conditional approximation directly can be seen as follows. For $\Po^\resone(\lambda)$ approximation, $G_{1,2} = \frac{1 - e^{-\lambda} - \lambda e^{-\lambda}}{\lambda (1-e^{-\lambda})}.$ In the case where $\lambda$ is small, this tends towards $\frac{1}{2}$. If one were to use Stein's method for Poisson approximation, then $G_{0,2} = \frac{1 - e^{-\lambda} }{\lambda}$, which has a limit of 1 for small $\lambda$. Therefore, using the conditional approximation appropriately, one can typically reduce the error bound by a factor of $\frac{1}{2}$ when $\lambda$ is small.}

It is worthwhile to point out that the representation \Ref{steinidentity} enables us to relate an estimate of conditional negative binomial approximation to that of the corresponding negative binomial approximation. This property does not seem to be shared by the general conditional compound Poisson approximation.

\begin{lem}\label{uncond-cond-thm}
If a nonnegative integer valued random variable $W$ can be approximated by $\NB(r,p)$ and it can be shown that
$$
  |\E\B_0 g_0(W)| \leq \epsilon_1 \|g_0\|_0 + \epsilon_2 \|\Delta g_0\|_0,
$$
for all functions~$g_0$ on $\Z_0$ for which $\|g_0\|_0$ and $\|\Delta g_0\|_0$ are finite, 
and $\epsilon_1$, $\epsilon_2$ are positive, then $W^{\resm}$ can be approximated by $\Nb^{\resm}(r,p)$ with
$$d_{TV}(\mathcal{L}(W^{\resm}), \Nb^{\resm}(r,p)) \leq \frac{1}{\Pr(W \geq m)} \left\{\epsilon_1 G_{m,1} + \epsilon_2 G_{m,2}\right\}.
$$
\end{lem}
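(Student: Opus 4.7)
The plan is to reduce conditional negative binomial approximation on $\Z_m$ to the unconditional bound on $\Z_0$ by constructing a suitable extension of the Stein solution $g_{m,f}$ of \Ref{Steineq}. For $f\in\cF_m$, the Stein identity gives
\[\E f(W^{\resm})-\NB^{\resm}(r,p)\{f\}=\E\B_m g_{m,f}(W^{\resm})=\frac{1}{\Pr(W\ge m)}\E[\B_m g_{m,f}(W)\ind_{W\ge m}],\]
so it suffices to bound the right-hand expectation by applying the hypothesis to a well-chosen $g_0$ on $\Z_0$.

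I would define $g_0(i)=0$ for $i\le m$ and $g_0(i)=g_{m,f}(i)$ for $i\ge m+1$. A short case check on the ranges $i<m$, $i=m$, $i>m$ shows that $\B_0 g_0(i)=\B_m g_{m,f}(i)\ind_{i\ge m}$ on all of $\Z_0$: for $i<m$ both $g_0(i)$ and $g_0(i+1)$ vanish; at $i=m$ the indicator $\ind_{i>m}$ kills the subtracted term in $\B_m g_{m,f}(m)$, which then equals $p(r+m)g_{m,f}(m+1)=\B_0 g_0(m)$ (since $g_0(m)=0$); and for $i>m$ the two operators coincide. Consequently $\E\B_0 g_0(W)=\E[\B_m g_{m,f}(W)\ind_{W\ge m}]$, and the hypothesis yields $|\E\B_0 g_0(W)|\le\epsilon_1\|g_0\|_0+\epsilon_2\|\Delta g_0\|_0$.

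It remains to show $\|g_0\|_0\le G_{m,1}$ and $\|\Delta g_0\|_0\le G_{m,2}$. The first is immediate since $\|g_0\|_0=\sup_{i\ge m+1}|g_{m,f}(i)|=\|g_{m,f}\|_m$. For $\|\Delta g_0\|_0=\sup_{i\ge 1}|g_0(i+1)-g_0(i)|$, the differences vanish for $i\le m-1$ and equal $|\Delta g_{m,f}(i)|$ for $i\ge m+1$ (hence are at most $G_{m,2}$), so only the boundary contribution $|g_0(m+1)-g_0(m)|=|g_{m,f}(m+1)|$ requires attention. This boundary term is the main obstacle, because $g_{m,f}(m+1)-g_{m,f}(m)$ is not captured by $\|\Delta g_{m,f}\|_m$ in the definition of $G_{m,2}$.

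To control it, I would appeal to \Ref{Steineq} at $i=m$, which reads $p(r+m)g_{m,f}(m+1)=f(m)-\NB^{\resm}(r,p)\{f\}$. Writing $f=\ind_A$ for some $A\subset\Z_m$, exactly one of $A$ or $\Z_m\setminus A$ is contained in $\{m+1,m+2,\ldots\}$, so $|f(m)-\NB^{\resm}(r,p)\{f\}|\le \Pr(Z>m)/\Pr(Z\ge m)$, giving $|g_{m,f}(m+1)|\le \Pr(Z>m)/(p(r+m)\Pr(Z\ge m))=G_{m,2}$ by Theorem~\ref{thmdeltag}. Assembling the three inequalities and taking the supremum over $f\in\cF_m$ delivers the stated total variation bound.
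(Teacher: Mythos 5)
Your proof is correct and takes essentially the same route as the paper: your $g_0$ is exactly the paper's zero-extension $L_f$, and the bound follows from the same norm comparison $\|g_0\|_0\le G_{m,1}$, $\|\Delta g_0\|_0\le G_{m,2}\vee|g_{m,f}(m+1)|$, together with control of the boundary term. The only (harmless) difference is that you read off $g_{m,f}(m+1)=\bigl(f(m)-\NB^{\resm}(r,p)\{f\}\bigr)/\bigl(p(r+m)\bigr)$ directly from Stein's equation \Ref{Steineq} at $i=m$, whereas the paper obtains the same identity via the strong Markov property of the associated birth--death process; both then conclude $|g_{m,f}(m+1)|\le G_{m,2}$ using Theorem~\ref{thmdeltag}.
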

\begin{proof} For each $f\in\mathcal{F}_m$, we define $L_f(w)=\left\{\begin{array}{ll}g_{m,f}(w),&\mbox{for }w>m,\\
0,&\mbox{for }w\le m,\end{array}\right.$ then it follows from \Ref{Steineq} that
\begin{eqnarray*}
&&\left|\E f(W^{\resm})-\Nb^{\resm}(r,p)\{f\}\right|\\
&&=\left|\E\left(p(r+W^{\resm})g_{m,f}(W^{\resm}+1)-W^{\resm}g_{m,f}(W^{\resm})\ind_{W^{\resm}>m}\right)\right|\\
&&=\frac{\left|\E\B_0 L_f(W)\right|}{\Pr(W\ge m)}\le \frac{\epsilon_1 \|L_f\|_0 + \epsilon_2 \|\Delta L_f\|_0}{\Pr(W\ge m)}.
\end{eqnarray*}
However, $\|\Delta L_f\|_0=G_{m,2}\vee \sup_{f\in\cf_m}|\Delta L_f(m)|$, therefore it remains to show that $\sup_{f\in\cf_m}|\Delta L_f(m)|=\sup_{f\in\cf_m}|g_{m,f}(m+1)|\le G_{m,2}$. To this end, {similar to the derivation of \Ref{gdecomp}} we use the strong Markov property to obtain
\begin{align*}
g_{m,f}(m+1)&= \E\int_0^{\tau^{\resm}_{m,m+1}} [f(Z^{\resm}_{m}(t)) - \NB^{\resm}(r,p)({f})] dt \\\
	&= \frac{1}{p(r+m)} \left( f(m) - \NB^{\resm}(r,p)({f}) \right),
\end{align*}
which ensures $|g_{m,f}(m+1)|\le \frac{1-\Pr(Z^{\resm} = m)}{p(r+m)} {= G_{m,2}}$. 
\ignore{To maximise the above, we choose $f = \ind_{\{m\}}$,
\[ \Rightarrow \Delta g_m(m) = \frac{1-\Pr(Z^{\resm} = m)}{r(p+m)},\]
which corresponds exactly to the bound \Ref{deltag}. Utilising $g(m) = 0$, then,
\begin{align*}
\E[ r(p+& W^{\resm}) g(W^{\resm}+1) - W^{\resm}g(W^{\resm}) \ind_{W^{\resm} > m}]\\
	&= \E[ r(p+W^{\resm}) g(W^{\resm}+1) - W^{\resm}g(W^{\resm})]\\
	&= \frac{ \E[ p(r+W)g(W+1) - Wg(W)]}{\Pr(W \geq m)},
\end{align*}
where we also define $g(i) = 0,$ $\forall i \leq m$. Therefore, as the numerator of the above is simply the term one bounds in the unconditional case, if \Ref{unconditional} is true, then we can use the exact same bound but using $G_{m,1}, G_{m,2}$ in place of their unconditional equivalents.}
\end{proof}

\rem{Some care is needed when we apply Lemma~\ref{uncond-cond-thm} because of the small probability $\Pr(W\ge m)$ in the denominator. 
However, in reality, the most interesting case is for $m=1$ and we will show that the error bounds for approximations with $m=1$ are usually small.}

\subsection{Applications}

In our first example, we consider the exceedances of a sequence of independent but not necessarily identically distributed random variables. 

\begin{example} \label{example1} Let $X_i$, $i\in\{1,\dots,n\}$ be independent random variables with $\Pr(X_i>s)=p_i$ and $N_{s,n}$ be the number of exceedances above $s$, then
\begin{align}
d_{TV}(\mathcal{L}(N_{s,n}^\resone),\Po^\resone(\lambda)) \leq \frac{\frac{1-e^{-\lambda} - \lambda e^{-\lambda}}{\lambda (1-e^{-\lambda})} \sum_{i=1}^n p_i^2}{1-\prod_{i=1}^n(1-p_i)},
\end{align}
where $\lambda = \sum_{i=1}^np_i$.

In the case where $n$ is fixed, $p_i = p$ for all $i$ and $p$ is small, then the bound is asymptotically $\frac{p}{2}$.
\end{example}

\begin{proof} The claim easily follows from
Lemma~\ref{uncond-cond-thm}, Corollary~\ref{poissonbounds} and equation~(1.23) from Barbour, Holst \& Janson~(1992).
\end{proof}

\begin{example} Let $\{Y_i,\ 1\le i\le n\}$ be i.i.d. random variables with $p=\Pr(Y_i>s)$ and $X_i=Y_i\wedge Y_{i+1}$, $1\le i\le n$, where $Y_{n+1}:=Y_1$. 
With $b = \frac{2p-3p^2}{1+2p-3p^2}$ and $a = (1-b)np^2$, the number $N_{s,n}$ of exceedances of $\{X_i,\ 1\le i\le n\}$ above $s$ satisfies
\begin{align}
d_{TV}(\mathcal{L}(N_{s,n}^\resone),\NB^\resone(a/b,b)) \leq  \frac{32.2p}{\sqrt{(n-1)(1-p)^3}} \cdot \frac{a G_{1,2}}{\Pr(N_{s,n} \geq 1)},\label{tworuns}
\end{align}
with $G_{1,2}$ defined in \Ref{deltag}. When $n$ is fixed and $p$ is small, the upper bound is asymptotically 
$\frac{16.1p}{\sqrt{(n-1)(1-p)^3}}$.\end{example}


\begin{proof} Using Lemma~\ref{uncond-cond-thm}, one can exploit the proof of 
Theorem 4.2 in Brown \& Xia~(2001) word for word, with Stein's factors replaced by their conditional equivalents, to get the first claim.
For the asymptotic result, Theorem~8.G in Barbour, Holst \& Janson~(1992) gives
\begin{align*}
\Pr(N_{s,n} \geq 1) &\geq 1- e^{-\L(1-p)} - (5p^2 - 4p^3),
\end{align*}
which, after some elementary expansion, yields $\frac{a G_{1,2}}{\Pr(N_{s,n} \geq 1)}\asymp\frac12$.
\end{proof}

{Lemma}~\ref{uncond-cond-thm} essentially states that if a random variable $W$ of interest can be well approximated by a negative binomial random variable, then its conditional distribution can {also} be well approximated by the conditional negative binomial distribution. The following example shows that the converse is not true. In other words, the conditional negative binomial approximation may be appropriate even if the unconditional approximation is poor.

Consider a sequence of random variables $X_1, \ldots, X_n$ which are conditionally independent given a parameter random variable $\Theta$, where $\Theta$ takes values 0 or 1 with distribution $\Pr(\Theta=1)=1-\Pr(\Theta=0)=q$. When the parameter $\Theta=0$, exceedances will not happen, while $\Theta=1$ is the phase where exceedances may happen. Such phenomena are very common in quality control, seismology and  finance. In quality control (Lambert~(1992)), if manufacturing equipment is well maintained, {it} will not fail, but when the equipment is wearing out {due to} insufficient maintenance, it has a positive probability to fail during its operation. In seismology (Ellsworth \& Beroza~(1995)), earthquakes are strongly linked to a distinctive seismic nucleation phase. In finance (Yalamova \& McKelvey~(2011)), the ``herding behavior" is often linked to different phases of the financial market with some dying off and some leading to crashes. These phenomena can not be modelled by a Poisson distribution as the errors of approximation are too large to justify a Poisson approximation. For this reason, a zero-inflated Poisson is a more suitable choice (Lambert~(1992)).

\begin{example} With the setup in the preceding paragraph, let $p_1=\Pr(X_1>s|\Theta=1)$ and $\L=np_1$, then the number $N_{s,n}$ of exceedances satisfies
$$d_{TV}(\cL(N_{s,n}^\resone),\Po^\resone(\L))\le \frac{p_1(1-e^{-\L}-\L e^{-\L})}{1-(1-p_1)^n}$$
and the bound is asymptotically $\frac 12 p_1$ for fixed $n$ and small $p_1$.
\end{example}

\begin{proof} For convenience, we write $W=N_{s,n}$ and we wish to approximate $W^{\resone}$ with $\Po^{\resone}(\L)$. To this end, we observe from \Ref{steineqPo} that
\begin{align*}
&\left|\E f(W^{\resone})-\Po^\resone(\L)\{f\}\right| =\left|\E[\C_1g_{1,f}(W^{\resone})]\right|\\
&= \left|\frac{\E[\C_1g_{1,f}(W)\ind_{W\ge 1}]}{\Pr(W\geq 1)}\right|=\left|\frac{\E[\C_1g_{1,f}(W)\ind_{W\ge 1}|\Theta=1]}{\Pr(W\geq 1|\Theta=1)}\right|\\
&\le \frac{G_{1,2}np_1^2}{1-(1-p_1)^n},
\end{align*}
where the inequality is derived as in the proof of Example~\ref{example1}. 
\end{proof}

\def\rsa{{\it Rand.\ Struct.\ Alg.\/}~}
\def\ac{{\it Academic Press, New York\/}}
\def\aap{{\it Adv.\ Appl.\ Prob.\/}~}
\def\ap{{\it Ann.\ Probab.\/}~}
\def\anst{{\it Ann.\ Statist.\/}~}
\def\amst{{\it Ann.\ Math.\ Statist.\/}~}
\def\anap{{\it Ann.\ Appl.\ Probab.\/}~}
\def\jap{{\it J.~Appl.\ Probab.\/}~}
\def\jws{{Wiley, New York}}
\def\ny{{New York}}
\def\ptrf{{\it Probab.\ Theory Rel.\ Fields\/}~}
\def\sp{{Springer}~}
\def\spa{{\it Stoch.\ Procs.\ Appl.\/}~}
\def\sv{{Springer-Verlag, Berlin}}
\def\tpa{{\it Theor.\ Probab.\ Appl.\/}~}
\def\zw{{\it Z.~Wahrsch.\ verw.\ Geb.\/}~}
\def\berksym{{\it Proc.\ Sixth Berkeley Symp.\ Math.\ Statist.\ Prob.\/}~}
\def\spl{{\it Statist.\ Probab.\ Lett.\/}~}
\def\lns{{Lecture Notes in Statistics}~}
\def\statsci{{\it Statist.\ Science\/}~}
\def\mcap{{\it Meth.\ Comput.\ Appl.\ Probab.\/}~}
\def\esaim{{\it ESAIM:\ Prob.\ \&~Statist.\/}~}
\def\astin{{\it ASTIN Bulletin\/}~}
\def\bullisi{{\it Bull.\ Int.\ Statist.\ Inst.\/}~}
\def\eljp{{\it Electron.\ J.~Probab.\/}~}
\def\cpc{{\it Combin.\ Probab.\ Comp.\/}~}
\def\pcps{{\it Proc.\ Cam.\ Philos.\ Soc.\/}~}
\def\tcs{{\it Theor.\ Comp.\ Sci.\/}~}
\def\sjad{{\it SIAM J.~Algebraic Discr.\ Methods\/}~}
\def\jma{{\it J.~Math.\ Anal.\/}~}
\def\sp{{Springer}}

\vspace{-0.8cm}

\end{document}